\newtheorem{thm}{Theorem}
\newtheorem{lem}[thm]{Lemma}
\newtheorem{prop}[thm]{Proposition}
\newtheorem{cor}[thm]{Corollary}
\numberwithin{thm}{section}
\numberwithin{equation}{section}
\theoremstyle{definition}
\newtheorem*{conj}{Conjecture}
\newcommand{\rat}{\mathbb Q}
\newcommand{\real}{\mathbb R}
\newcommand{\com}{\mathbb C}
\newcommand{\alg}{\overline\rat}
\newcommand{\algt}{\alg^{\times}}
\newcommand{\intg}{\mathbb Z}
\newcommand{\nat}{\mathbb N}
\newcommand{\tor}{\mathrm{Tor}}
\newcommand{\xx}{{\bf x}}
\newcommand{\yy}{{\bf y}}
\newcommand{\comment}[1]{}
\title[Metric heights]{Metric heights on an Abelian group}
\author[C.L. Samuels]{Charles L. Samuels}
\address{Oklahoma City University, Department of Mathematics, 2501 N. Blackwelder, Oklahoma City, OK 73106, USA}
\subjclass[2010]{11R04, 11R09, 20K99 (Primary), 26A06, 30D20 (Secondary)}
\begin{document}

\begin{abstract}
	Suppose $m(\alpha)$ denotes the Mahler measure of the non-zero algebraic number $\alpha$.
	For each positive real number $t$, the author studied a version $m_t(\alpha)$ of the Mahler measure that has the triangle inequality.
	The construction of $m_t$ is generic, and may be applied to a broader class of functions defined on any Abelian group $G$.
	We prove analogs of known results with an abstract function on $G$ in place of the Mahler measure.   In the process, we resolve an earlier
	open problem stated by the author regarding $m_t(\alpha)$.	
\end{abstract}

\maketitle

\section{Heights and their metric versions}

Suppose that $K$ is a number field and $v$ is a place of $K$ dividing the place $p$ of $\rat$.  Let $K_v$ and $\rat_p$ be their respective completions
so that $K_v$ is a finite extension of $\rat_p$.  We note the well-known fact that
\begin{equation*}
	\sum_{v\mid p} [K_v:\rat_p] = [K:\rat],
\end{equation*}
where the sum is taken over all places $v$ of $K$ dividing $p$.
Given $x\in K_v$, we define $\|x\|_v$ to be the unique extension of the $p$-adic absolute value on $\rat_p$ and set
\begin{equation} \label{NormalAbs}
	|x|_v = \|x\|_v ^{[K_v:\rat_p]/[K:\rat]}.
\end{equation}
If $\alpha\in K$, then $\alpha\in K_v$ for every place $v$, so we may define the {\it (logarithmic) Weil height} by
\begin{equation*}
	h(\alpha) = \sum_v \log^+ |\alpha|_v.
\end{equation*}
Due to our normalization of absolute values \eqref{NormalAbs}, this definition is independent of $K$, meaning that $h$ is well-defined
as a function on the multiplicative group $\algt$ of non-zero algebraic numbers.

It follows from Kronecker's Theorem that $h(\alpha) = 0$ if and only if $\alpha$ is a root of unity, and it can easily be verified that $h(\alpha^n) = |n|\cdot h(\alpha)$ for all
integers $n$.  In particular, we see that $h(\alpha) = h(\alpha^{-1})$.
A theorem of Northcott \cite{Northcott} asserts that, given a positive real number $D$, there are only finitely many algebraic numbers $\alpha$ with $\deg\alpha\leq D$ and 
$h(\alpha)\leq D$.

The Weil height is closely connected to a famous 1933 problem of D.H. Lehmer \cite{Lehmer}.  The {\it (logarithmic) Mahler measure} of a non-zero algebraic number $\alpha$ is
defined by 
\begin{equation} \label{MahlerMeasure}
	m(\alpha) = \deg\alpha \cdot h(\alpha).  
\end{equation}	
In attempting to construct large prime numbers, Lehmer came across the problem of determining whether there exists a sequence of
algebraic numbers $\{\alpha_n\}$, not roots of unity, such that $m(\alpha_n)$ tends to $0$ as $n\to\infty$.  This problem remains unresolved, although substantial evidence
suggests that no such sequence exists (see \cite{BDM, MossWeb, Schinzel, Smyth}, for instance).  This assertion is typically called Lehmer's conjecture.

\begin{conj}[Lehmer's Conjecture]
	There exists $c>0$ such that $m(\alpha) \geq c$ whenever $\alpha\in \algt$ is not a root of unity. 
\end{conj}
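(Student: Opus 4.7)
This ``statement'' is of course Lehmer's conjecture itself, which has been open since 1933, so any honest proposal must admit that I do not have a plan to settle it. What I can sketch is the state of the art and the angle most relevant to the paper at hand, namely the reformulation in terms of the metric Mahler measure $m_t$.

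The best unconditional lower bound is Dobrowolski's theorem, which gives
\begin{equation*}
	m(\alpha) \gg \left( \frac{\log\log \deg\alpha}{\log \deg\alpha} \right)^{3}
\end{equation*}
for $\alpha$ not a root of unity, using auxiliary polynomials and $p$-adic arguments exploiting Fermat's little theorem on conjugates. A direct attack would try to replace the $(\log\log / \log)^{3}$ factor by a constant, but no mechanism is known that eliminates the degree dependence entirely. Partial cases are resolved: Smyth proved the conjecture for non-reciprocal $\alpha$ with the explicit constant $c = \log\theta_0$ where $\theta_0$ is the real root of $x^{3}-x-1$, and the conjecture is trivial for $\alpha$ of bounded degree by Northcott's theorem (already recalled in the excerpt).

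The approach suggested by the framework of this paper is to pass to the metric version $m_t$. Since $m_t$ satisfies the triangle inequality, multiplicative relations among conjugates and Galois actions translate into additive estimates that are often easier to manipulate. The plan would be: first, use the construction of $m_t$ to reinterpret Lehmer's conjecture as a statement about the infimum of $m_t$ on $\algt$ modulo torsion; second, exploit the group-theoretic results proved later in the paper (the abstract analogs on an Abelian group $G$) to control how $m_t(\alpha)$ decomposes along factorizations $\alpha = \beta_1 \cdots \beta_n$; and third, try to rule out factorizations that would witness $m(\alpha_n) \to 0$.

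The hard part will of course be the third step: no known tool prevents the existence of a sequence $\alpha_n$ whose multiplicative structure forces $m(\alpha_n) \to 0$ while the degrees tend to infinity. Every known metric reformulation, including $m_t$, transfers the conjecture but does not break it, because the infimum of $m_t$ on cosets modulo roots of unity is itself unknown and is essentially equivalent to Lehmer's original problem. Thus the honest conclusion is that I would not attempt a proof here, but would instead state the conjecture, cite the evidence already noted in the references \cite{BDM, MossWeb, Schinzel, Smyth}, and use it as motivation for the abstract results to follow.
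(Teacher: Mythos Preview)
Your assessment is correct: the paper does not prove this statement. It is labeled as a \emph{conjecture} and the surrounding text explicitly says ``This problem remains unresolved''; the paper merely states Lehmer's conjecture, cites the supporting evidence \cite{BDM, MossWeb, Schinzel, Smyth} and the partial results of Dobrowolski and Voutier, and uses it to motivate the study of the metric Mahler measures $m_t$. Your proposal to do exactly that---state the conjecture, record the known bounds, and move on---is precisely what the paper does, so there is nothing further to compare.
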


Dobrowolski \cite{Dobrowolski} provided the best known lower bound on $m(\alpha)$ in terms of $\deg\alpha$, while Voutier \cite{Voutier}
later gave a  version of this result with an effective constant.
Nevertheless, only little progress has been made on Lehmer's conjecture for an arbitrary algebraic number $\alpha$.

Dubickas and Smyth \cite{DubSmyth, DubSmyth2} were the first to study a modified version of the Mahler measure that has the triangle inequality.  They defined
the {\it metric Mahler measure} by
\begin{equation*}
	m_1(\alpha) = \inf\left\{ \sum_{n=1}^N m(\alpha_n): N\in\nat,\ \alpha_n\in \algt\ \alpha = \prod_{n=1}^N\alpha_n\right\},
\end{equation*}
so that the infimum is taken over all ways of writing $\alpha$ as a product of algebraic numbers.  It is easily verified that $m_1(\alpha\beta) \leq m_1(\alpha) + m_1(\beta)$,
and that $m_1$ is well-defined on $\algt/\tor(\algt)$.  It is further noted in \cite{DubSmyth2} that $m_1(\alpha) = 0$ if and only if $\alpha$ is a torsion point of $\algt$ and that
$m_1(\alpha) = m_1(\alpha^{-1})$ for all $\alpha\in\algt$.  These facts ensure that $(\alpha,\beta) \mapsto m_1(\alpha\beta^{-1})$ defines a metric on $\algt/\tor(\algt)$.  
This metric induces the discrete topology if and only if Lehmer's conjecture is true.

The author \cite{SamuelsCollection, SamuelsParametrized} further extended this definition to define the {\it $t$-metric Mahler measure}
\begin{equation} \label{TMetricMahler}
	m_t(\alpha) = \inf\left\{ \left( \sum_{n=1}^N m(\alpha_n)^t\right)^{1/t}:N\in\nat,\ \alpha_n\in \algt\ \alpha = \prod_{n=1}^N\alpha_n\right\}.
\end{equation}
In this context, we examined the functions $t\mapsto m_t(\alpha)$ for a fixed algebraic number $\alpha$.  It is shown, for example, that this is a continuous piecewise function
where each piece is an $L_p$ norm of a real vector.

Although \eqref{TMetricMahler} may be useful in studying Lehmer's problem, this construction applies in far greater generality.
The natural abstraction of \eqref{TMetricMahler} is examined by the author in \cite{SamuelsCollection}, although we only present the most basic results that are needed in 
studying the $t$-metric Mahler measures.  The goal of this article is to recover some results of \cite{SamuelsCollection} and \cite{SamuelsParametrized}
with an abstract height function in place of the Mahler measure.  In the process, we shall uncover new results regarding the $t$-metric Mahler measure, 
including the resolution of a problem posed in \cite{SamuelsParametrized}.  These results are reported in Section \ref{NewResults}.

Before we can state our main theorem, we must recall the basic definitions and results of \cite{SamuelsCollection}.
Let $G$ be a multiplicatively written Abelian group.  We say that $\phi:G \to [0,\infty)$ is a {\it (logarithmic) height} on $G$ if
\begin{enumerate}[(i)]
	\item $\phi(1) = 0$, and
	\item $\phi(\alpha) = \phi(\alpha^{-1})$ for all $\alpha\in G$.
\end{enumerate}
It is well-known that both the Weil height and the Mahler measure are heights on $\algt$.
If $t$ is a positive real number then we say that $\phi$ has the {\it $t$-triangle inequality} if
\begin{equation*}
	\phi(\alpha\beta) \leq \left (\phi(\alpha)^t + \phi(\beta)^t\right )^{1/t}
\end{equation*}
for all $\alpha,\beta\in G$.  We say that $\phi$ has the {\it $\infty$-triangle inequality} if
\begin{equation*}
	\phi(\alpha\beta) \leq \max\{\phi(\alpha),\phi(\beta)\}
\end{equation*}
for all $\alpha,\beta\in G$.  For appropriate $t$, we say that these functions are {\it $t$-metric heights}.   We observe that the $1$-triangle inequality is simply 
the classical triangle inequality while the $\infty$-triangle inequality is the strong triangle inequality.
If $s\geq t$ and $\phi$ is an $s$-metric height then it is also a $t$-metric height.
The metric height properties also yield some compatibility of $\phi$ with the group structure of $G$.

\begin{prop} \label{MetricProperties}
	If $\phi:G\to [0,\infty)$ is a $t$-metric height for some $t\in (0,\infty]$ then
	\begin{enumerate}[(i)]
		\item\label{Subgroup} $\phi^{-1}(0)$ is a subgroup of $G$.
		\item\label{WellDefined} $\phi(\zeta \alpha) = \phi(\alpha)$ for all $\alpha\in G$ and $\zeta\in \phi^{-1}(0)$.  That is, $\phi$ is well-defined on the quotient $G/\phi^{-1}(0)$.
		\item\label{FancyMetric} If $t\geq 1$, then the map $(\alpha,\beta)\mapsto \phi(\alpha\beta^{-1})$ defines a metric on $G/\phi^{-1}(0)$.
	\end{enumerate}
\end{prop}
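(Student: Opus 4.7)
The proof is essentially a careful verification of group and metric axioms from the $t$-triangle inequality and the two defining properties of a height. I plan to dispatch the three parts in order, treating the $t=\infty$ case in parallel with finite $t$ throughout.

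For part \eqref{Subgroup}, I will check the three subgroup conditions for $\phi^{-1}(0)$. The identity $1$ lies in $\phi^{-1}(0)$ by axiom (i) of a height. Closure under inversion is immediate from axiom (ii), since $\phi(\alpha^{-1}) = \phi(\alpha)$. Closure under multiplication is where the metric hypothesis enters: if $\phi(\alpha) = \phi(\beta) = 0$, then the $t$-triangle inequality gives $\phi(\alpha\beta) \leq (0^t + 0^t)^{1/t} = 0$ (respectively $\max\{0,0\} = 0$ when $t=\infty$). For part \eqref{WellDefined}, the $t$-triangle inequality applied to the factorization $\zeta \cdot \alpha$ yields $\phi(\zeta\alpha) \leq \phi(\alpha)$. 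For the reverse inequality, I write $\alpha = \zeta^{-1}(\zeta\alpha)$ and use part \eqref{Subgroup} to ensure $\zeta^{-1}\in\phi^{-1}(0)$, so the same inequality applied to this factorization gives $\phi(\alpha) \leq \phi(\zeta\alpha)$.

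For part \eqref{FancyMetric}, let $d(\alpha,\beta) = \phi(\alpha\beta^{-1})$. Well-definedness on $G/\phi^{-1}(0)$ follows from \eqref{Subgroup} and \eqref{WellDefined}: replacing $\alpha,\beta$ by $\zeta_1\alpha,\zeta_2\beta$ with $\zeta_1,\zeta_2\in\phi^{-1}(0)$ changes $\alpha\beta^{-1}$ by the factor $\zeta_1\zeta_2^{-1}\in\phi^{-1}(0)$, which by \eqref{WellDefined} does not change the value of $\phi$. Non-negativity is automatic. The identity of indiscernibles holds because $d(\alpha,\beta) = 0$ iff $\alpha\beta^{-1}\in\phi^{-1}(0)$, which is exactly the condition that $\alpha,\beta$ represent the same coset. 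Symmetry reduces to axiom (ii) via $\phi(\alpha\beta^{-1}) = \phi((\alpha\beta^{-1})^{-1}) = \phi(\beta\alpha^{-1})$. Finally, for the triangle inequality I factor $\alpha\gamma^{-1} = (\alpha\beta^{-1})(\beta\gamma^{-1})$ and apply the $t$-triangle inequality to obtain
\begin{equation*}
    d(\alpha,\gamma) \leq \bigl(d(\alpha,\beta)^t + d(\beta,\gamma)^t\bigr)^{1/t},
\end{equation*}
then invoke the elementary fact that $(a^t+b^t)^{1/t} \leq a + b$ for all $a,b\geq 0$ and $t\geq 1$ (with $\max\{a,b\} \leq a+b$ handling $t=\infty$).

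There is no serious obstacle here; the argument is essentially bookkeeping. The one point worth flagging is the hypothesis $t\geq 1$ in \eqref{FancyMetric}: the inequality $(a^t+b^t)^{1/t} \leq a+b$ fails for $t < 1$, so without this restriction one would obtain only a quasi-metric rather than a genuine metric on $G/\phi^{-1}(0)$. All other parts of the proposition go through for every $t\in(0,\infty]$.
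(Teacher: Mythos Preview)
Your proof is correct and complete. The paper itself does not supply a proof of this proposition; it is presented there as a basic fact recalled from \cite{SamuelsCollection}, so there is no argument in the paper to compare against. Your direct verification from the height axioms and the $t$-triangle inequality is exactly the expected approach, and your remark isolating where the hypothesis $t\geq 1$ is needed is accurate.
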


If we are given an arbitrary height $\phi$ on $G$ and $t>0$, then following \eqref{TMetricMahler}, we obtain a natural $t$-metric height associated to $\phi$.  
Given any subset $S\subseteq G$ containing the identity $e$, we write
\begin{equation*}
	S^{\infty} = \{(\alpha_1,\alpha_2,\ldots): \alpha_n\in S,\ \alpha_n = e\ \mathrm{for\ all\ but\ finitely\ many}\ n\}.
\end{equation*}
If $S$ is a subgroup, then $S^\infty$ is also a group by applying the operation of $G$ coordinatewise.
Define the group homomorphism $\tau_G:G^{\infty} \to G$ by $\tau_G(\alpha_1,\alpha_2,\cdots) = \prod_{n=1}^\infty \alpha_n.$
For each point $\xx = (x_1,x_2,\ldots)\in \real^\infty$, we write the $L_t$ norm of $\xx$
\begin{equation*}
	\|\xx\|_t = \begin{cases}
		\left(\sum_{n=1}^\infty |x_n|^t\right)^{1/t} & \mathrm{if}\ t\in(0,\infty) \\
		\max_{n\geq 1}\{|x_n|\} & \mathrm{if}\ t=\infty
		\end{cases}
\end{equation*}
The {\it $t$-metric version of $\phi$} is the map $\phi_t:G\to[0,\infty)$ given by
\begin{equation*}
	\phi_t(\alpha) = \inf\left\{\|(\phi(\alpha_1),\phi(\alpha_2),\ldots)\|_t: (\alpha_1,\alpha_2,\ldots)\in G^\infty\ \mathrm{and}\ \tau_G(\alpha_1,\alpha_2,\ldots) = \alpha\right\}.
\end{equation*}
Alternatively, we could write
\begin{equation*}
	\phi_t(\alpha) = \inf\left\{\left(\sum_{n=1}^N \phi(\alpha_n)^t\right)^{1/t}: N\in\nat,\ \alpha_n\in G\ \mathrm{and}\ \alpha = \prod_{n=1}^N\alpha_n\right\}
\end{equation*}
for $t\in(0,\infty)$ and
\begin{equation*}
	\phi_\infty(\alpha) = \inf\left\{\max_{1\leq n\leq N}\{\phi(\alpha_n)\}: N\in\nat,\ \alpha_n\in G\ \mathrm{and}\ \alpha = \prod_{n=1}^N\alpha_n\right\}.
\end{equation*}
Among other things, we see that $\phi_t$ is indeed a $t$-metric height on $G$.

\begin{prop} \label{MetricConstruction}
  If $\phi:G\to [0,\infty)$ is a height on $G$ and $t\in (0,\infty]$ then
  \begin{enumerate}[(i)]
  \item\label{MetricHeightConversion} $\phi_t$ is a $t$-metric height on $G$ with $\phi_t\leq\phi$.
  \item\label{BestMetricHeight} If $\psi$ is a $t$-metric height with $\psi\leq\phi$ then $\psi\leq \phi_t$.
  \item\label{NoChangeMetric} $\phi = \phi_t$ if and only if $\phi$ is a $t$-metric height.  In particular, $(\phi_t)_t = \phi_t$.
  \item\label{Comparisons} If $s\in (0,t]$ then $\phi_s \geq \phi_t$.
  \end{enumerate}
\end{prop}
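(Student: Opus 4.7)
The plan is to establish parts (i) and (ii) directly from the definitions and then deduce (iii) and (iv) as formal consequences.

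For (i), four properties must be verified. The identity $\phi_t(1) = 0$ follows from the singleton factorization $1 = 1$, which yields norm $\phi(1) = 0$. The symmetry $\phi_t(\alpha) = \phi_t(\alpha^{-1})$ follows because inverting a factorization $\alpha = \prod_n \alpha_n$ coordinatewise produces a factorization of $\alpha^{-1}$ whose associated vector $(\phi(\alpha_n^{-1}))$ has the same $L_t$-norm as $(\phi(\alpha_n))$, by property (ii) in the definition of a height. The bound $\phi_t \leq \phi$ comes from the trivial one-term factorization. For the $t$-triangle inequality, I would concatenate a factorization of $\alpha$ of length $N$ with one of $\beta$ of length $M$ to produce a factorization of $\alpha\beta$ of length $N+M$; its $L_t$-norm decomposes as $\left(\sum_n \phi(\alpha_n)^t + \sum_m \phi(\beta_m)^t\right)^{1/t}$ when $t<\infty$ (or the maximum of two maxima when $t=\infty$). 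Taking infima over the two halves independently gives the desired inequality.

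For (ii), the crucial step is to lift the $t$-triangle inequality from two factors to any finite number. A straightforward induction on $N$ shows that $\psi\left(\prod_{n=1}^N \alpha_n\right)^t \leq \sum_{n=1}^N \psi(\alpha_n)^t$ when $t<\infty$, and analogously $\psi\left(\prod_{n=1}^N \alpha_n\right) \leq \max_n \psi(\alpha_n)$ when $t=\infty$. Combining this with $\psi \leq \phi$ gives $\psi(\alpha) \leq \|(\phi(\alpha_1),\ldots,\phi(\alpha_N))\|_t$ for each factorization of $\alpha$; passing to the infimum yields $\psi \leq \phi_t$.

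Both (iii) and (iv) then follow almost immediately. For (iii), if $\phi = \phi_t$ then $\phi$ is a $t$-metric height by (i); conversely, if $\phi$ is already a $t$-metric height, then applying (ii) with $\psi = \phi$ gives $\phi \leq \phi_t$, which combines with $\phi_t \leq \phi$ from (i) to force equality. The identity $(\phi_t)_t = \phi_t$ is then immediate since $\phi_t$ is itself a $t$-metric height. For (iv), the elementary fact that $\|x\|_s \geq \|x\|_t$ for every non-negative vector $x$ whenever $0 < s \leq t$ can be applied factorization-by-factorization before taking the infimum, giving $\phi_s \geq \phi_t$. The principal obstacle is organizational rather than mathematical: one must treat $t \in (0,1)$, $t \in [1,\infty)$, and $t = \infty$ uniformly, which is accomplished by phrasing everything in terms of $L_t$-norms of the vector of $\phi$-values rather than the particular algebraic shape of the triangle inequality.
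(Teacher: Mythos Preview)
The paper does not actually contain a proof of this proposition; it is stated without proof in Section~1 as a result recalled from \cite{SamuelsCollection}. Your proposal is a correct and complete proof: the verifications in (i) are routine, the induction in (ii) is the standard way to extend the $t$-triangle inequality to finite products, and (iii) and (iv) are indeed formal consequences of (i), (ii), and the monotonicity of $L_t$-norms in $t$. There is nothing to compare against in the present paper, but your argument is exactly the expected one and would be at home in \cite{SamuelsCollection}.
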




\section{The function $t\mapsto \phi_t(\alpha)$} \label{NewResults}

For the remainder of this article, we shall assume that $\phi$ is a height on the Abelian group $G$ and that $\alpha$ is a fixed element of $G$.  All subsequent definitions
depend on these choices, although we will often suppress this dependency in our notation.
We say that a set $S\subseteq G$ containing the identity {\it replaces $G$ at $t$} if
\begin{equation*}
	\phi_t(\alpha)  = \inf\left\{\|(\phi(\alpha_1),\phi(\alpha_2),\ldots)\|_t: (\alpha_1,\alpha_2,\ldots)\in S^\infty\ \mathrm{and}\ \tau_G(\alpha_1,\alpha_2,\ldots) = \alpha\right\}.
\end{equation*}
In other words, we need only consider points in $S$ in the definition of $\phi_t(\alpha)$.

For a given height $\phi$, it is standard to ask whether the infimum in its definition is attained.  Before proceeding further, we give two equivalent conditions.

\begin{thm} \label{ReplaceableEquivalences}
	If $t\in(0,\infty]$ then the following conditions are equivalent.
	\begin{enumerate}[(i)]
		\item \label{InfAttained} The infimum in the definition of $\phi_t(\alpha)$ is attained.
		\item \label{FiniteReplaceable} There exists a finite set $R$ that replaces $G$ at $t$.
		\item \label{SemifiniteReplaceable} There exists a set $S$, with $\phi(S)$ finite, that replaces $G$ at $t$.
	\end{enumerate}
\end{thm}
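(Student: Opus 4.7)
The plan is to establish the cycle (i)$\Rightarrow$(ii)$\Rightarrow$(iii)$\Rightarrow$(i), where the first two implications are essentially bookkeeping and all of the substance lies in the third.

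For (i)$\Rightarrow$(ii), I take a sequence $(\alpha_1,\alpha_2,\ldots)\in G^\infty$ with $\tau_G(\alpha_1,\alpha_2,\ldots)=\alpha$ that realizes the infimum defining $\phi_t(\alpha)$. Since all but finitely many $\alpha_n$ equal $e$, the set $R$ consisting of $e$ together with the non-identity entries is finite, and the same sequence lies in $R^\infty$ and achieves the value $\phi_t(\alpha)$. Because $R\subseteq G$, the infimum taken over $R^\infty$ is at least $\phi_t(\alpha)$, forcing equality, so $R$ replaces $G$ at $t$. The implication (ii)$\Rightarrow$(iii) is then immediate, because every finite $R$ has finite image $\phi(R)$.

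The main content is (iii)$\Rightarrow$(i). Suppose $S$ with $\phi(S)$ finite replaces $G$ at $t$, and write $\phi(S)\setminus\{0\}=\{w_1,\ldots,w_m\}$ with $0<w_1<\cdots<w_m$; the degenerate case $\phi(S)\subseteq\{0\}$ yields $\phi_t(\alpha)=0$, attained by any $S^\infty$-decomposition of $\alpha$. For each decomposition $(\alpha_n)\in S^\infty$ with $\tau_G(\alpha_n)=\alpha$, the integer multiplicities $n_i=|\{n:\phi(\alpha_n)=w_i\}|$ completely determine the norm. For $t<\infty$ this norm equals $(\sum_{i=1}^m n_iw_i^t)^{1/t}$, and the crucial observation is that $(\sum_{i=1}^m n_iw_i^t)^{1/t}\leq M$ forces each $n_i\leq (M/w_1)^t$, so only finitely many multiplicity tuples produce a norm below any given $M$. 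Hence the set of achievable norms forms a discrete subset of $[0,\infty)$, and the infimum $\phi_t(\alpha)$---which, by the replacement hypothesis, coincides with the infimum over decompositions in $S^\infty$---must be a minimum, realized by some sequence in $S^\infty\subseteq G^\infty$. For $t=\infty$ the achievable norms lie in the finite set $\{0,w_1,\ldots,w_m\}$, and the same conclusion is trivial.

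The main obstacle is therefore the discreteness step in (iii)$\Rightarrow$(i): finiteness of $\phi(S)$ is used precisely to bound the positive values of $\phi$ on $S$ away from zero, and that is all one needs; no further input from the group $G$ or the particular height $\phi$ enters the argument.
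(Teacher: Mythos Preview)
Your proof is correct and follows essentially the same route as the paper: the paper's Lemma~3.1, applied with $J=\{t\}$, bounds the number of nonzero coordinates in a competitive decomposition by $(\phi(\alpha)/\delta)^t$ where $\delta=\inf\phi(S\setminus\phi^{-1}(0))>0$, which is exactly your bound $n_i\le (M/w_1)^t$ in slightly different packaging. One small bonus in your write-up is that you treat $t=\infty$ explicitly, whereas the paper's lemma is stated only for bounded $J\subseteq(0,\infty)$ and so does not formally cover that endpoint.
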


We now wish to study the behavior of $f_{\phi,\alpha}:(0,\infty)\to [0,\infty)$, defined by $$f_{\phi,\alpha}(t) = \phi_t(\alpha),$$ on a given open interval $I\subseteq (0,\infty)$
as was done with the Mahler measure in \cite{SamuelsCollection, SamuelsParametrized}.   We say that $S\subseteq G$
{\it replaces $G$ uniformly on $I$} if $S$ replaces $G$ at all $t\in I$.  In this case, it is important to note that $S$ is independent of $t$.
We say that a subset $K\subseteq I$ is {\it uniform} if there exists a point $\xx\in\real^\infty$ such that $f_{\phi,\alpha}(t) = \|\xx\|_t$ for all $t\in K$.  Indeed, the uniform subintervals
of $I$ are the simplest to understand as there always exist $x_1,\ldots,x_N\in \real$ such that
\begin{equation*}
	f_{\phi,\alpha}(t) = \left(\sum_{n=1}^N |x_n|^t\right)^{1/t}
\end{equation*}
for all $t\in K$.  We say that $t$ is {\it standard} if there exists a uniform open interval $J\subseteq I$ containing $t$.  Otherwise, we say that $t$ is {\it exceptional}.
The author asked in \cite{SamuelsParametrized} whether the Mahler measure has only finitely many exceptional points.  We answer this question in the affirmative 
and prove several additional facts about heights in general.

\begin{thm} \label{UniformReplacements}
	Assume $I\subseteq (0,\infty)$ is an open interval such that the infimum in $\phi_t(\alpha)$ is attained for all $t\in I$.  If $G$ is a countable group
	then the following conditions are equivalent.
	\begin{enumerate}[(i)]
		\item\label{FiniteFunc} There exists a finite set $\mathcal X \subseteq \real^\infty$ such that
			$\phi_t(\alpha) =  \min\{ \|\xx\|_t: \xx\in \mathcal X\}$ for all $t\in I$.
		\item\label{FiniteExceptional} $I$ contains only finitely many exceptional points.
		\item\label{Finite} There exists a finite set $R$ that replaces $G$ uniformly on $I$.
		\item\label{SemiFinite} There exists a set $S$, with $\phi(S)$ finite, that replaces $G$ uniformly on $I$.
		
	\end{enumerate}
\end{thm}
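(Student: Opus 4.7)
My plan is to prove the theorem in the cycle $(iii)\Rightarrow(iv)\Rightarrow(iii)\Rightarrow(i)\Rightarrow(ii)\Rightarrow(iii)$. The implication $(iii)\Rightarrow(iv)$ is trivial: take $S=R$.

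Both $(iv)\Rightarrow(iii)$ and $(iii)\Rightarrow(i)$ follow from Dickson's lemma. Suppose $S\subseteq G$ satisfies $\phi(S)=\{v_1,\ldots,v_k\}$ and replaces $G$ uniformly on $I$, and let $\mathcal A\subseteq\nat^k$ be the set of multiplicity vectors $\mu=(\mu_1,\ldots,\mu_k)$ that arise from $S$-factorizations $\alpha=\alpha_1\cdots\alpha_N$, meaning $\mu_i=\#\{n:\phi(\alpha_n)=v_i\}$. Dickson's lemma says that the set $\mathcal A^*\subseteq\mathcal A$ of componentwise-minimal elements is finite and that every $\mu\in\mathcal A$ satisfies $\mu\geq\mu^*$ for some $\mu^*\in\mathcal A^*$. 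Because each $v_i^t\geq 0$, this yields
\begin{equation*}
	\phi_t(\alpha)^t \;=\; \min_{\mu^*\in\mathcal A^*}\sum_{i=1}^k \mu_i^* v_i^t\qquad(t\in I),
\end{equation*}
and hence $(i)$ with $\mathcal X=\{\xx_{\mu^*}:\mu^*\in\mathcal A^*\}$, where $\xx_\mu\in\real^\infty$ has $\mu_i$ entries equal to $v_i$. To upgrade to $(iii)$, I pick one witnessing $S$-factorization per $\mu^*\in\mathcal A^*$ and let $R$ be the (finite) union of their factors.

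For $(i)\Rightarrow(ii)$, I will use the classical fact that a nonzero real exponential polynomial $\sum_l c_l d_l^t$ with distinct $d_l>0$ has only finitely many real zeros. For any pair $\xx,\yy\in\mathcal X$, the difference $\|\xx\|_t^t-\|\yy\|_t^t$ is of this form, so $\{t\in I:\|\xx\|_t=\|\yy\|_t\}$ is either all of $I$ (exactly when the multisets of $|x_n|$ and $|y_n|$ agree) or a finite set. After identifying vectors inducing identical $L_t$-norm functions, the switching points between distinct vectors of $\mathcal X$ form a finite subset of $I$, and every exceptional point of $f_{\phi,\alpha}$ must lie among them.

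Finally $(ii)\Rightarrow(iii)$ is where the countability of $G$ becomes crucial. Finitely many exceptional points split $I$ into finitely many maximal uniform open intervals $J_1,\ldots,J_m$, each equipped with a vector $\xx_k$ satisfying $f_{\phi,\alpha}(t)=\|\xx_k\|_t$ on $J_k$. The set $\mathcal F$ of factorizations of $\alpha$ is a subset of the countable set $G^\infty$, hence countable, while $J_k$ is uncountable; by the attained-infimum hypothesis, every $t\in J_k$ admits some $\yy\in\mathcal F$ with $\|\yy\|_t=\|\xx_k\|_t$. Pigeonhole therefore produces a single $\yy_k\in\mathcal F$ realizing this equality at uncountably many $t\in J_k$, and the exponential-polynomial uniqueness from the previous paragraph forces $\|\yy_k\|_t=\|\xx_k\|_t$ on all of $I$. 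Taking $R$ to be the (finite) union of the factors of $\yy_1,\ldots,\yy_m$ yields a finite set replacing $G$ at every $t\in\bigcup_k J_k$, and at an exceptional boundary point $t_{\mathrm{exc}}$ of some $J_k$, upper semicontinuity of $f_{\phi,\alpha}$ (as an infimum of continuous functions of $t$) combined with the trivial bound $f_{\phi,\alpha}(t_{\mathrm{exc}})\leq\|\yy_k\|_{t_{\mathrm{exc}}}$ forces $f_{\phi,\alpha}(t_{\mathrm{exc}})=\|\yy_k\|_{t_{\mathrm{exc}}}$. The pigeonhole step here is the main obstacle: countability of $G$ is essential to rule out a scenario in which no single factorization serves an entire uniform interval.
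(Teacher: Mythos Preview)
Your proof is correct and departs from the paper's in two substantive ways.  For $(iv)\Rightarrow(i)$ and $(iv)\Rightarrow(iii)$ the paper first handles bounded $I$ via Lemma~\ref{Finiteness} and then, for unbounded $I$, runs a recursive descent $A_0\supseteq A_1\supseteq\cdots$ through sets of multiplicity vectors to isolate a single minimizing vector ${\bf M}$ valid for all large $t$; your appeal to Dickson's lemma replaces this entire analysis in one stroke, producing a finite antichain $\mathcal A^*$ of multiplicity vectors that works uniformly on $I$ regardless of boundedness.  For $(ii)\Rightarrow(iii)$ the paper treats each exceptional point $t_m$ by separately choosing a factorization $(\beta_{m,1},\beta_{m,2},\ldots)$ attaining $\phi_{t_m}(\alpha)$ and adjoining its entries to $R$; your upper-semicontinuity argument shows instead that the factorization $\yy_k$ already serving the adjacent interval $J_k$ must realize $\phi_{t_m}(\alpha)$ as well, which is shorter and yields a slightly smaller $R$.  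The remaining steps---the exponential-polynomial zero count for $(i)\Rightarrow(ii)$ and the countability/pigeonhole core of $(ii)\Rightarrow(iii)$---match the paper's Lemmas~\ref{LimitEqual} and~\ref{Countable} in substance.  One small point you pass over: the assertion that the complement of the exceptional set decomposes into \emph{uniform} open intervals is exactly Lemma~\ref{CoverLemma} in the paper (standard points have uniform neighborhoods, and connectedness together with the exponential-polynomial identity principle patches these into a single $\xx$); this is straightforward but not literally immediate from the definition of ``standard''.
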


In the case where $\phi$ is the Mahler measure, and where $I$ is a bounded interval, the majority of Theorem \ref{UniformReplacements} was established in \cite{SamuelsParametrized}.
Indeed, in this special case, we showed that \eqref{SemiFinite} $\implies$ \eqref{FiniteFunc} $\iff$ \eqref{FiniteExceptional}.
Nevertheless, Theorem \ref{UniformReplacements} is considerably more general than any earlier work because it requires neither the assumption that $\phi$ is the Mahler measure nor
the assumption that $I$ is bounded.
Moreover, since the Mahler measure is known to satisfy \eqref{SemiFinite}, we have now resolved the aforementioned question of \cite{SamuelsParametrized}.

\begin{cor} \label{ExcepCor}
	The Mahler measure $m$ on $\algt$ has only finitely many exceptional points in $(0,\infty)$.
\end{cor}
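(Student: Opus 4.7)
The plan is to derive the corollary as a direct application of Theorem~\ref{UniformReplacements} with $G = \algt$, $\phi = m$, and $I = (0,\infty)$. Since that theorem asserts the equivalence of its four conditions, it suffices to verify its running hypotheses together with any one of conditions \eqref{FiniteFunc}--\eqref{SemiFinite}; the equivalence then yields \eqref{FiniteExceptional}, which is precisely the assertion of Corollary~\ref{ExcepCor}.

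The group $\algt$ is countable, since there are only countably many polynomials in $\rat[x]$ and each has only finitely many roots, so the countability hypothesis is immediate.

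The substantive input I would invoke is condition \eqref{SemiFinite}. As explicitly noted in the prose preceding the corollary, the Mahler measure is known to satisfy this condition: there exists a set $S\subseteq\algt$ with $m(S)$ finite that replaces $\algt$ uniformly on $(0,\infty)$. Since uniform replacement of $\algt$ on $(0,\infty)$ entails replacement of $\algt$ at each individual $t\in(0,\infty)$, Theorem~\ref{ReplaceableEquivalences} then ensures that the infimum defining $m_t(\alpha)$ is attained for every $t\in(0,\infty)$, so the remaining hypothesis of Theorem~\ref{UniformReplacements} is also verified.

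With countability of $\algt$, attainment of the infimum, and \eqref{SemiFinite} all in hand, Theorem~\ref{UniformReplacements} delivers \eqref{FiniteExceptional}: the interval $(0,\infty)$ contains only finitely many exceptional points of $f_{m,\alpha}$, which is the assertion of Corollary~\ref{ExcepCor}. The real content lies entirely inside Theorem~\ref{UniformReplacements}, and the corollary is essentially a bookkeeping step. The only delicate point to check is that the invoked property \eqref{SemiFinite} of the Mahler measure is genuinely available on the unbounded interval $(0,\infty)$ rather than only on bounded subintervals; this is exactly the upgrade that Theorem~\ref{UniformReplacements} supplies over the bounded-interval results of \cite{SamuelsParametrized}, and is why the question of \cite{SamuelsParametrized} could not previously be settled by the same route.
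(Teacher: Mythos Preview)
Your proposal is correct and matches the paper's own argument: the paper deduces Corollary~\ref{ExcepCor} from Theorem~\ref{UniformReplacements} by noting that the Mahler measure satisfies condition~\eqref{SemiFinite} on $(0,\infty)$ (via the set $S_\alpha$ from \cite{SamuelsCollection}), and it likewise observes that \eqref{SemiFinite} implies attainment of the infimum through Theorem~\ref{ReplaceableEquivalences}. Your explicit verification of the countability of $\algt$ and of the infimum-attainment hypothesis is exactly what the paper has in mind.
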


This means that $f_{m,\alpha}$ is a piecewise function, with finitely many pieces, where each piece is the $L_t$ norm of a vector with real entries.
Moreover, the infimum in $m_t(\alpha)$ is attained by a single point for all sufficiently large $t$.  In the case where $\alpha\in \intg$,
it follows from \cite{JankSamuels} that this point may be taken to be the vector having the prime factors of $\alpha$ as its entries.

Still considering the case $\phi = m$, the results of \cite{SamuelsCollection} show that
\begin{equation*}
	S_\alpha = \left\{\gamma\in \algt: \gamma^n\in K_\alpha\ \mbox{for some}\ n\in\nat,\ m(\gamma) \leq m(\alpha)\right\},
\end{equation*}
where $K_\alpha$ is the Galois closure of $\rat(\alpha)/\rat$, replaces $\algt$ uniformly on $(0,\infty)$.  However, it is well-known that $m(S_\alpha)$ is finite,
so Theorem \ref{UniformReplacements} implies the existence of a finite set $R_\alpha$ that replaces $\algt$ uniformly on $(0,\infty)$.  It remains open to 
determine such a set, although we suspect that
\begin{equation*}
	R_\alpha = \left\{\gamma\in \algt: \deg(\gamma)\leq \deg\alpha,\ m(\gamma) \leq m(\alpha)\right\}
\end{equation*}
satisfies this property.  The work of Jankauskas and the author \cite{JankSamuels} provides examples, however, in which 
$K_\alpha$ does not replace $\algt$ uniformly on $(0,\infty)$.

Returning to Theorem \ref{UniformReplacements} and taking an arbitrary $\phi$, 
it is important to note the necessity of our assumption that the infimum in $\phi_t(\alpha)$ is attained for every $t\in I$.  
Indeed, Theorem 1.6 of \cite{SamuelsCollection} asserts that
\begin{equation*}
	h_t(\alpha) = \begin{cases} h(\alpha) & \mathrm{if}\ t\leq 1 \\ 0 & \mathrm{if}\ t > 1, \end{cases}
\end{equation*}
where $h$ denotes the logarithmic Weil height on $\algt$.  The intervals $(0,1)$ and $(1,\infty)$ are both uniform by using $\xx = (h(\alpha),0,0,\ldots)$ and
$\xx = (0,0,0,\ldots)$, respectively, so $1$ is the only possible exceptional point.  However, $t\mapsto h_t(\alpha)$ is discontinuous on $(0,2)$ whenever $\alpha$ is not a root
of unity, so condition \eqref{FiniteFunc}
does not hold on this interval.  Theorem \ref{UniformReplacements} does not apply because the infimum in $h_t(\alpha)$ is not attained for any $t > 1$.  
Nevertheless, the assumption \eqref{SemiFinite} would imply that the infimum in $\phi_t(\alpha)$ is attained for all $t\in I$ from Theorem \ref{ReplaceableEquivalences}.

The assumption that $G$ is countable is needed only to prove that \eqref{FiniteExceptional} $\implies$ \eqref{Finite}.
If an interval $I$ is uniform, then $\phi_t(\alpha) = \|\xx\|_t$ for all $t\in I$.   It seems plausible, however, that $\xx$ does not arise from a point
that attains the infimum in $\phi_t(\alpha)$.  This concern can be resolved if, for example, $G$ is countable, although we do not know whether this assumption is necessary.

\section{Proofs}

Our first lemma is the primary component in the proof of Theorem \ref{ReplaceableEquivalences} and will also be used in the proof of Theorem \ref{UniformReplacements}.

\begin{lem} \label{Finiteness}
	Suppose $J\subseteq (0,\infty)$ is bounded.  If there exists a set $S\subseteq G$, with $\phi(S)$ finite, such that $S$ replaces $G$ uniformly on $J$, then there exists
	a finite set $D\subseteq \tau_G^{-1}(\alpha)\cap S^\infty$ such that
	\begin{equation*}
		\phi_t(\alpha) = \min\left\{\left(\sum_{n=1}^\infty \phi(\alpha_n)^t\right)^{1/t}: (\alpha_1,\alpha_2\ldots)\in D\right\}
	\end{equation*}
	for all $t\in J$.  In particular, there exists a finite set $\mathcal X \subseteq \real^\infty$ such that $\phi_t(\alpha) =  \min\{ \|\xx\|_t: \xx\in \mathcal X\}$ for all $t\in J$.
\end{lem}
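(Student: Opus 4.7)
The plan is to exploit the finiteness of $\phi(S)$ to show that, uniformly in $t \in J$, only finitely many ``types'' of factorization of $\alpha$ in $S^\infty$ need be considered. By a \emph{type} I mean the finite multiset of positive values $\phi(\alpha_n)$ arising from a factorization $(\alpha_n) \in S^\infty$ with $\tau_G(\alpha_n) = \alpha$; since $\|(\phi(\alpha_n))\|_t$ depends only on this multiset, reducing to finitely many types will immediately furnish the desired finite set $D$.

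First I would dispose of the trivial case in which $\phi(S) \subseteq \{0\}$: every $S^\infty$-factorization of $\alpha$ then has $\|\cdot\|_t = 0$ for all $t$, so any single factorization supplied by the replacement hypothesis can serve as $D$. Otherwise, set $T = \sup J < \infty$ and $v_{\min} = \min(\phi(S) \setminus \{0\}) > 0$, both well-defined by the boundedness of $J$ and the finiteness of $\phi(S)$.

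The key step is a uniform bound on the number $N$ of indices with $\phi(\alpha_n) > 0$. For any $(\alpha_n) \in S^\infty$ with $\tau_G(\alpha_n) = \alpha$ and $\|(\phi(\alpha_n))\|_t \leq \phi(\alpha) + 1$, the inequality $N \cdot v_{\min}^t \leq \sum_n \phi(\alpha_n)^t \leq (\phi(\alpha)+1)^t$ yields $N \leq ((\phi(\alpha)+1)/v_{\min})^t$, which is bounded above by a finite constant $N_{\max}$ depending only on $\phi(\alpha)$, $v_{\min}$, and $T$. Since $\phi_t(\alpha) \leq \phi(\alpha)$ trivially, the replacement hypothesis produces at each $t \in J$ a minimizing sequence in $S^\infty \cap \tau_G^{-1}(\alpha)$ whose terms eventually satisfy $N \leq N_{\max}$, so the infimum defining $\phi_t(\alpha)$ is unchanged upon restricting to such factorizations. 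The types of these factorizations form a subset of the multisets of size at most $N_{\max}$ drawn from the finite set $\phi(S) \setminus \{0\}$, and are therefore finite in number. Selecting one representative factorization in $S^\infty \cap \tau_G^{-1}(\alpha)$ for each realized type yields the finite set $D$, and
\begin{equation*}
\phi_t(\alpha) = \min\left\{\left(\sum_{n=1}^\infty \phi(\alpha_n)^t\right)^{1/t} : (\alpha_n) \in D\right\}
\end{equation*}
for all $t \in J$ follows immediately; taking $\mathcal X$ to be the collection of value-sequences of the chosen representatives gives the final conclusion.

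The main obstacle is precisely this uniform bound on $N$, which is where the boundedness of $J$ plays its essential role: if $T$ were infinite, the quantity $((\phi(\alpha)+1)/v_{\min})^t$ could grow without control as $t$ increased when $\phi(\alpha)+1 > v_{\min}$, permitting infinitely many viable types and defeating the counting argument.
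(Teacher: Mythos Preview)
Your proposal is correct and follows essentially the same approach as the paper: use the positive minimum $v_{\min}$ of $\phi(S)\setminus\{0\}$ together with the upper bound $T$ on $J$ to obtain a uniform bound on the number of coordinates with nonzero $\phi$-value, and then invoke the finiteness of $\phi(S)$ to conclude that only finitely many value-patterns (your ``types'') can occur. Your write-up is in fact more careful than the paper's in a few places---you separate out the trivial case $\phi(S)\subseteq\{0\}$, use $\phi(\alpha)+1$ rather than $\phi(\alpha)$ to sidestep any boundary issue with condition~(c), and spell out the passage from the uniform bound to the finite set $D$ via representatives of realized types---but the underlying argument is the same.
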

\begin{proof}
	We know that that $\phi_t(\alpha)$ is the infimum of 
	\begin{equation} \label{InfimumQuantity}
		\left( \sum_{n=1}^\infty \phi(\alpha_n)^t\right)^{1/t}
	\end{equation}
	over all points $(\alpha_1,\alpha_2,\ldots) \in G^\infty$ satisfying the following three conditions:
	\begin{enumerate}[(a)]
		\item\label{Equals} $\alpha =\prod_{n=1}^\infty \alpha_n$.
		\item\label{InS} $\alpha_n\in S$ for all $n$.
		\item\label{Bounded} $\left( \sum_{n=1}^\infty \phi(\alpha_n)^t\right)^{1/t} \leq \phi(\alpha)$.
	\end{enumerate}
	If $(\alpha_1,\alpha_2,\ldots)\in G^\infty$ is a point satisfying these conditions,
	let $B$ be the number of its coordinates that do not belong to $\phi^{-1}(0)$.  Also set
	$\delta = \inf \phi(S\setminus\phi^{-1}(0))$.  Since $\phi(S)$ is finite, we know that $\delta >0$.  By property \eqref{Bounded}, we see that
	\begin{equation*}
		\delta^t\cdot B \leq  \sum_{n=1}^\infty \phi(\alpha_n)^t \leq \phi(\alpha)^t,
	\end{equation*}
	so it follows that $B \leq  \phi(\alpha)^u/\delta^u$, where $u$ is any upper bound for $J$.
	Therefore, at most $\phi(\alpha)^u/\delta^u$ terms $\phi(\alpha_n)$ in \eqref{InfimumQuantity} can be non-zero and the result follows.
\end{proof}

In view of this lemma, the proof of Theorem \ref{ReplaceableEquivalences} is essentially finished.  Indeed, one obtains \eqref{SemifiniteReplaceable}
$\implies$ \eqref{InfAttained} immediately from the lemma by taking $J=\{t\}$, while the other implications of the theorem are obvious.

We shall now proceed with the proof of Theorem \ref{UniformReplacements} which will require three additional lemmas, the first of which is a standard trick from complex analysis.

\begin{lem} \label{LimitEqual}
	If $\xx, \yy \in \real^\infty$ and $\|\xx\|_t = \|\yy\|_t$ for all $t$ on a set having a limit point in $\real$, then $\|\xx\|_t = \|\yy\|_t$ for all $t\in(0,\infty)$.
\end{lem}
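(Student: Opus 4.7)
The plan is to invoke the identity theorem for holomorphic functions, which is precisely the standard complex-analytic trick hinted at by the author. Since $\xx, \yy \in \real^\infty$ have only finitely many non-zero entries, list the non-zero absolute values of the coordinates of $\xx$ as $a_1,\ldots,a_M > 0$ and those of $\yy$ as $b_1,\ldots,b_K > 0$ (with multiplicity). Then define
\begin{equation*}
	F(z) = \sum_{i=1}^M a_i^z - \sum_{j=1}^K b_j^z,
\end{equation*}
where $c^z := \exp(z\log c)$ for $c > 0$. Each summand is an entire function of $z\in \com$, and because the sum is finite, $F$ is entire.

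For real $t > 0$, we have $F(t) = \|\xx\|_t^t - \|\yy\|_t^t$, so the equality $\|\xx\|_t = \|\yy\|_t$ is equivalent to $F(t) = 0$. The hypothesis therefore exhibits a set of zeros of $F$ having a limit point in $\real \subseteq \com$. By the identity theorem, $F$ vanishes identically on $\com$, so $\|\xx\|_t^t = \|\yy\|_t^t$ for every $t \in (0,\infty)$, and taking $t$-th roots finishes the proof.

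The only delicate points are minor book-keeping: if one of $\xx, \yy$ is the zero sequence (so $M=0$ or $K=0$), the equality forces the other to vanish on the limiting set as well, and $F$ remains a pure exponential sum to which the same argument applies; while the limit point guaranteed by the hypothesis could in principle equal $0$, this causes no trouble since $F$ is entire on all of $\com$, not merely on $(0,\infty)$. I do not anticipate a substantive obstacle here — the whole content of the lemma is the observation that $a^t$ extends to an entire function of $t$ whenever $a > 0$, so that a finite linear combination of such exponentials cannot have zeros accumulating at any finite point unless it vanishes identically.
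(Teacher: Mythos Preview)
Your proof is correct and follows essentially the same approach as the paper: raise both sides to the $t$-th power to obtain finite exponential sums, extend to entire functions of a complex variable, and apply the identity theorem. Your write-up is in fact slightly more careful than the paper's, since you explicitly restrict to the non-zero coordinates (so that each $a_i^z$ is genuinely entire) and address the edge cases, but the underlying argument is identical.
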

\begin{proof}
	Let $\xx = (x_1,x_2,\ldots,x_N,0,0,\ldots)$, $\yy = (y_1,y_2,\ldots,y_M,0,0,\ldots)$, $g_\xx(t) = \|\xx\|_t^t$ and $g_\yy(t) = \|\yy\|_t^t$.   Therefore,
	\begin{equation*}
		g_\xx(z) = \sum_{n=1}^N |x_n|^z \quad\mathrm{and}\quad g_\yy(t)  = \sum_{m=1}^M |y_m|^z
	\end{equation*}
	are entire functions.  Hence, if they agree on a set having a limit point in $\com$, then they are equal in $\com$.

\end{proof}

We also must study the behavior of intervals in which every point is standard.  While every point in such an interval is guaranteed only to have a uniform open neighborhood,
it turns out that this neighborhood may be taken to be the interval itself.

\begin{lem} \label{CoverLemma}
  Suppose $0\leq a < b\leq \infty$.  Then $(a,b)$ is uniform if and only if every point in $(a,b)$ is standard.
\end{lem}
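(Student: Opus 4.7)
The forward direction is immediate from the definitions: if $(a,b)$ is uniform, then for every $t\in(a,b)$ the interval $(a,b)$ itself is a uniform open interval containing $t$, so $t$ is standard.

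For the converse, my plan is to fix a base point $t_0 \in (a,b)$, use its uniform open neighborhood (shrunk if necessary so as to lie inside $(a,b)$) together with a witness vector $\mathbf{x}_0 \in \real^\infty$ satisfying $f_{\phi,\alpha}(t) = \|\mathbf{x}_0\|_t$ on that neighborhood, and then argue that $\mathbf{x}_0$ in fact witnesses uniformity of the entire interval. For each $t \in (a,b)$, choose a uniform open neighborhood $J_t \subseteq (a,b)$ with witness vector $\mathbf{x}_t$, and set
\[
B = \{t \in (a,b) : \|\mathbf{x}_0\|_r = \|\mathbf{x}_t\|_r \text{ for all } r \in (0,\infty)\}.
\]
For $t \in B$, specializing $r = t$ gives $f_{\phi,\alpha}(t) = \|\mathbf{x}_t\|_t = \|\mathbf{x}_0\|_t$, so the task reduces to showing $B = (a,b)$ by a connectedness argument.

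The key tool is Lemma \ref{LimitEqual}: whenever two witnesses $\mathbf{x}_s, \mathbf{x}_t$ both compute $f_{\phi,\alpha}$ on the open overlap $J_s \cap J_t$, they satisfy $\|\mathbf{x}_s\|_r = \|\mathbf{x}_t\|_r$ on that overlap, which has a limit point in $\real$, and hence this equality extends to every $r \in (0,\infty)$. Using this, I would prove $B$ is both open and closed in $(a,b)$. For openness: if $t \in B$ and $s \in J_t$, then $J_t \cap J_s$ is a nonempty open set on which both $\mathbf{x}_t$ and $\mathbf{x}_s$ realize $f_{\phi,\alpha}$, so Lemma \ref{LimitEqual} gives $\|\mathbf{x}_t\|_r = \|\mathbf{x}_s\|_r$ for all $r > 0$; combined with $t \in B$ this forces $s \in B$, so $J_t \subseteq B$. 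The same argument applied to the complement shows $J_t \subseteq (a,b) \setminus B$ whenever $t \notin B$, so $B$ is also closed in $(a,b)$. Since $t_0 \in B$ and $(a,b)$ is connected, $B = (a,b)$.

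The only real subtlety, and what could appear to obstruct the argument, is that the local witnesses $\mathbf{x}_t$ are not canonically determined and might seem mutually incompatible across overlapping uniform neighborhoods. Lemma \ref{LimitEqual} dissolves this concern by upgrading pointwise agreement on any open interval to equality of the associated $L_r$-norm functions on the whole of $(0,\infty)$, which is exactly what makes the clopen dichotomy go through and lets a single $\mathbf{x}_0$ serve as a global witness.
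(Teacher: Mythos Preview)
Your proof is correct. Both your argument and the paper's hinge on the same key ingredient, Lemma~\ref{LimitEqual}, to propagate agreement of local witnesses, but the packaging differs. The paper argues by contradiction: fixing a witness $\xx$ at a base point $t_0$, it assumes some $s_0\in(a,b)$ with $\phi_{s_0}(\alpha)\ne\|\xx\|_{s_0}$, sets $u=\inf\{t\ge t_0:\phi_t(\alpha)\ne\|\xx\|_t\}$, takes a uniform neighborhood of $u$ with witness $\yy$, and uses Lemma~\ref{LimitEqual} on the overlap $(a_0,u)$ to force $\|\xx\|_t\equiv\|\yy\|_t$, contradicting the definition of $u$. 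Your clopen/connectedness formulation is a cleaner and more structural version of the same continuation idea: rather than locating a first failure, you show directly that the equivalence class of the base witness under ``same $L_r$-norm function'' is open and has open complement. What your approach buys is a proof that avoids case analysis on the side of $s_0$ relative to $t_0$ and the somewhat delicate handling of the infimum $u$; what the paper's approach buys is slightly less setup (no need to fix choices $J_t,\xx_t$ for every $t$ in advance). Either way the content is the same.
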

\begin{proof}
	If $(a,b)$ is uniform, it is obvious that every point in $(a,b)$ is standard.  So we will assume that every point in $(a,b)$ is standard and that $(a,b)$ is not uniform.
	Let $t_0\in (a,b)$ so there exists $\varepsilon > 0$ and $\xx\in \real$ such that $\phi_t(\alpha) = \|\xx\|_t$
	for all $t\in (t_0 - \varepsilon,t_0+\varepsilon)$.  We have assumed that $(a,b)$ is not uniform, so there must exist $s_0\in(a,b)$ such that
	$\phi_{s_0}(\alpha)\ne \|\xx\|_{s_0}$.  Clearly $s_0 \ne t_0$.
	
	Assume without loss of generality that $s_0 > t_0$ and let
	\begin{equation*} \label{uInfimum}
		u = \inf\{t\in [t_0,b): \phi_t(\alpha) \ne \|\xx\|_t\}.
	\end{equation*}
	We clearly have that $u\in[t_0+\varepsilon,s_0] \subseteq (a,b)$, meaning, in particular, that $u$ is standard.  Therefore, there exists a neighborhood 
	$(a_0,b_0)\subseteq (t_0,b)$ of $u$ and $\yy\in \real^{\infty}$ such that
	\begin{equation} \label{S0Point}
		\phi_t(\alpha)= \|\yy\|_t
	\end{equation}
	for all $t\in (a_0,b_0)$.  However, by definition of $u$, we also know that $\phi_t(\alpha) = \|\xx\|_t$ for all $t\in (a_0,u)$.  Therefore, $\|\yy\|_t$ and $\|\xx\|_t$ agree on 
	a set having a limit point in $\real$, and we may apply Lemma \ref{LimitEqual} to find that they agree on $(a,b)$.  The definition of $u$ further implies the
	existence of a
	point $s\in [u,b_0)$ such that $\phi_s(\alpha) \ne \|\xx\|_s$.  Combining this with \eqref{S0Point}, we see that $\|\xx\|_s \ne \|\yy\|_s$, a contradiction.
			
\end{proof}
	
For a uniform interval $I$, we always know there exists $\xx\in\real^\infty$ such that $\phi_t(\alpha) = \|\xx\|_t$ for all $t\in I$.  Nevertheless, it is possible
that $\xx$ cannot be chosen to arise from a point that attains the infimum in $\phi_t(\alpha)$.  The following lemma provides an adequate resolution to this problem in the 
case where $G$ is countable.
	
\begin{lem} \label{Countable}
	Suppose $G$ is a countable group and $K\subseteq (0,\infty)$ is an uncountable subset.  Assume that the infimum in $\phi_t(\alpha)$ is attained for all $t\in K$.
	Then $K$ is uniform if and only if there exists $(\alpha_1,\alpha_2,\ldots)\in G^\infty$ such that
	\begin{equation} \label{SpecialUniform}
		\alpha = \prod_{n=1}^\infty \alpha_n\quad\mathrm{and}\quad \phi_t(\alpha) = \left(\sum_{n=1}^\infty \phi(\alpha_n)^t\right)^{1/t}
	\end{equation}
	for all $t\in K$.
\end{lem}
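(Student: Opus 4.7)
The plan is to dispose of the easy direction first and then use the countability of $G$ in a pigeonhole argument. If a sequence $(\alpha_1,\alpha_2,\ldots)\in G^\infty$ satisfying the displayed identity exists, then the vector $\xx = (\phi(\alpha_1),\phi(\alpha_2),\ldots)\in\real^\infty$ witnesses the uniformity of $K$, so only the forward implication requires work.

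For the forward direction, assume $K$ is uniform, so there exists $\xx\in\real^\infty$ with $\phi_t(\alpha) = \|\xx\|_t$ for every $t\in K$. The key observation is that $G^\infty$ consists of sequences that are eventually equal to $e$; hence $G^\infty = \bigcup_{N\geq 1} G^N$ is countable whenever $G$ is. For each $t\in K$, the hypothesis that the infimum in $\phi_t(\alpha)$ is attained produces a sequence $(\alpha_n(t))_{n\geq 1}\in G^\infty$ with $\tau_G((\alpha_n(t))_n) = \alpha$ and $\phi_t(\alpha) = \|(\phi(\alpha_n(t)))_n\|_t$. Since $K$ is uncountable and $G^\infty$ is countable, the pigeonhole principle delivers a single sequence $(\alpha_1,\alpha_2,\ldots)\in G^\infty$ and an uncountable subset $K_0\subseteq K$ on which $(\alpha_n(t))_n = (\alpha_1,\alpha_2,\ldots)$ for every $t\in K_0$.

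Setting $\yy = (\phi(\alpha_1),\phi(\alpha_2),\ldots)\in\real^\infty$, the choice of $K_0$ yields $\|\yy\|_t = \phi_t(\alpha) = \|\xx\|_t$ for every $t\in K_0$. Since every uncountable subset of $\real$ has a limit point in $\real$ (indeed, a condensation point inside itself), Lemma \ref{LimitEqual} upgrades this to $\|\xx\|_t = \|\yy\|_t$ for all $t\in(0,\infty)$. In particular, for every $t\in K$ we obtain $\phi_t(\alpha) = \|\xx\|_t = \|\yy\|_t = \left(\sum_{n=1}^\infty \phi(\alpha_n)^t\right)^{1/t}$, and the identity $\alpha = \prod_n \alpha_n$ is inherited from any single $t\in K_0$.

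The main obstacle is really only conceptual: once one recognizes that the eventually-trivial condition in the definition of $G^\infty$ makes $G^\infty$ countable, pigeonhole produces a single sequence valid on an uncountable subset of $K$, after which Lemma \ref{LimitEqual} propagates the equality to all of $K$. This also makes transparent why the countability of $G$ enters exactly here, consistent with the remark following Theorem \ref{UniformReplacements}.
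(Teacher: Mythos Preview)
Your proof is correct and follows essentially the same approach as the paper: both argue that $G^\infty$ is countable, apply a pigeonhole argument to obtain a single attaining sequence on an uncountable subset of $K$, and then invoke Lemma~\ref{LimitEqual} (via the uniform vector $\xx$) to extend the identity to all of $K$. The only cosmetic difference is that you spell out why $G^\infty$ is countable, which the paper leaves implicit.
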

\begin{proof}
	Assuming \eqref{SpecialUniform}, it is obvious that $K$ is uniform.  Hence, we assume that $K$ is uniform and let $\xx\in\real^\infty$ be such that
	\begin{equation} \label{UniformAssumption}
		\phi_t(\alpha) = \|\xx\|_t
	\end{equation}
	for all $t\in K$.  We have assumed the infimum in $\phi_t(\alpha)$ is attained for all $t\in K$.   Hence, for each such $t$, we may select 
	$(\alpha_{t,1},\alpha_{t,2},\ldots)\in G^\infty$ such that
	\begin{equation*}
		\alpha = \prod_{n=1}^\infty \alpha_{t,n}\quad\mathrm{and}\quad \phi_t(\alpha) = \left(\sum_{n=1}^\infty \phi(\alpha_{t,n})^t\right)^{1/t}.
	\end{equation*}
	Since $K$ is uncountable, 
	$t\mapsto (\alpha_{t,1},\alpha_{t,2},\ldots)$ maps an uncountable set to a countable set.  In particular, this map must be constant on an 
	uncountable subset $J\subseteq K$.  Hence, there exists $(\alpha_1,\alpha_2,\ldots)\in G^\infty$ such that
	\begin{equation*}
		\alpha = \prod_{n=1}^\infty \alpha_n\quad\mathrm{and}\quad \phi_t(\alpha) = \left(\sum_{n=1}^\infty \phi(\alpha_n)^t\right)^{1/t},
	\end{equation*}
	for all $t\in J$.  Applying \eqref{UniformAssumption}, we have that
	\begin{equation} \label{EqualInJ}
		\left(\sum_{n=1}^\infty \phi(\alpha_n)^t\right)^{1/t} =  \|\xx\|_t
	\end{equation}
	for all $t\in J$.  Since $J$ is uncountable, it must contain a limit point in $[0,\infty)$, and it follows from Lemma \ref{LimitEqual} that \eqref{EqualInJ}
	holds for all $t\in K$.  The lemma now follows from \eqref{UniformAssumption}.
\end{proof}
	
Before proceeding with the proof of Theorem \ref{UniformReplacements}, we provide a definition that will simplify the proof's language.
If $\mathcal X \subseteq \real^\infty$, we say that $s\in (0,\infty)$ is an {\it intersection point of $\mathcal X$} if there exist $\xx, \yy\in \mathcal X$ such that 
$\|\xx\|_s = \|\yy\|_s$ but $t\mapsto \|\xx\|_t$ is not the same function as $t\mapsto \|\yy\|_t$.

\begin{proof}[Proof of Theorem \ref{UniformReplacements}]
	We begin by proving that \eqref{FiniteFunc} $\implies$ \eqref{FiniteExceptional}.  We first show that $\mathcal X$ has only finitely many intersection points.
	To see this, assume that
	\begin{equation*}
		(x_1,x_2,\ldots,x_N,0,0,\ldots), (y_1,y_2,\ldots,y_M,0,0,\ldots) \in\mathcal X,
	\end{equation*}
	with $x_n, y_m\ne 0$, are distinct elements and define
	\begin{equation*}
		F(t) = \|\xx\|_t^t - \|\yy\|_t^t =  \sum_{n=1}^N |x_n|^t - \sum_{m=1}^M |y_n|^t.
	\end{equation*}
	We may assume without loss of generality that
	\begin{equation*}
		F(t) = \sum_{k=1}^K a_kb_k^t
	\end{equation*}
	for some positive integer $K$ and nonzero real numbers $a_k$ and $b_k$ with $b_1> \cdots > b_K > 0$.  Then it follows that
	\begin{equation*}
		\frac{F(t)}{a_1b_1^t} = 1 + \sum_{k=2}^K \frac{a_k}{a_1} \left(\frac{b_k}{b_1}\right)^t
	\end{equation*}
	which tends to $1$ as $t \to\infty$.  Since $a_1b_1^t$ has no zeros, all zeros of $F(t)$ must lie in a bounded subset of $(0,\infty)$.  
	Viewing $t$ as a complex variable, $F(t)$ is an entire function which is not identically $0$, so it cannot have infinitely many zeros in a bounded set.
	We conclude that there are only finitely many points $t$ such that $\|\xx\|_t = \|\yy\|_t$.
	Since $\mathcal X$ is finite, it can have only finitely many intersection points.
	
	It is now enough to show that every exceptional point is also an intersection point of $\mathcal X$.  We will prove the contrapositive of this statement, so 
	assume that $t\in I$ is not an intersection point of $\mathcal X$.
	Since there are only finitely many intersection points, we know there exists an open interval $J\subseteq I$ containing $t$ and having no intersection points of $\mathcal X$.
	
	Now assume that $J$ fails to be uniform and fix $t\in J$.  By our assumption, we know there exists $\xx\in \mathcal X$ such that $\phi_t(\alpha) = \|\xx\|_t$.
	There must also exist $s\in J$ such that $\phi_s(\alpha) \ne \|\xx\|_s$.  Since $\xx\in \mathcal X$, we certainly have that $\phi_s(\alpha) < \|\xx\|_s$.
	On the other hand, there must exist $\yy\in\mathcal X$ such that $\phi_s(\alpha) = \|\yy\|_s$, and we note that $\phi_t(\alpha) \leq \|\yy\|_t$.  Therefore, we have shown that
	\begin{equation*}
		\|\xx\|_t \leq \|\yy\|_t\quad\mathrm{and}\quad \|\xx\|_s > \|\yy\|_s.
	\end{equation*}
	By the Intermediate Value Theorem, there exists a point $r$ between $s$ and $t$ such that $\|\xx\|_r = \|\yy\|_r$.  This means that $J$ contains
	an intersection point, contradicting our assumption that $J$ contains none.  Therefore, we conclude that $J$ is uniform,
	implying that $t$ is standard.  Indeed, we have now shown that every exceptional point is also an intersection point of $\mathcal X$.

	We now prove that \eqref{FiniteExceptional} $\implies $ \eqref{Finite}.  To see this, assume that $I = (t_0,t_M)$ and that the exceptional points in $I$ are given by
	\begin{equation*}
		t_1 < t_2 < \cdots < t_{M-1}.
	\end{equation*}
	For each integer $m\in [1,M]$, it follows that $(t_{m-1},t_m)$ contains only standard points.  Applying Lemma \ref{CoverLemma}, we conclude that all of these intervals
	are uniform.
	
	Now we apply Lemma \ref{Countable} with $(t_{m-1},t_m)$ in place of $K$.  For each integer $m\in[1,M]$, there must exist 
	$(\alpha_{m,1},\alpha_{m,2},\ldots)\in G^\infty$ such that
	\begin{equation} \label{MReps}
		\alpha = \prod_{n=1}^\infty \alpha_{m,n}\quad\mathrm{and}\quad \phi_t(\alpha) = \left(\sum_{n=1}^\infty \phi(\alpha_{m,n})^t\right)^{1/t},
	\end{equation}
	for all $t\in (t_{m-1},t_m)$.  Moreover, for each integer $m\in [1,M)$, we select $(\beta_{m,1},\beta_{m,2},\ldots)\in G^\infty$ such that
	\begin{equation*} \label{NonMReps}
		\alpha = \prod_{n=1}^\infty \beta_{m,n}\quad\mathrm{and}\quad \phi_{t_m}(\alpha) = \left(\sum_{n=1}^\infty \phi(\beta_{m,n})^{t_m}\right)^{1/{t_m}}.
	\end{equation*}
	Now let
	\begin{equation*}
		R = \left\{ \alpha_{m,n}: 1\leq m\leq M,\ n\in\nat\right\} \cup \left\{ \beta_{m,n}: 1\leq m < M,\ n\in\nat\right\}
	\end{equation*}
	and note that $R$ is clearly finite.
	
	To see that $R$ replaces $G$ uniformly on $I$, we must assume that $t\in I$.  If $t\in (t_{m-1},t_m)$ for some $m$, then
	\begin{align*}
		\phi_t(\alpha) & \leq \inf\left\{ \|(\phi(\alpha_1),\phi(\alpha_2),\ldots)\|_t: (\alpha_1,\alpha_2,\ldots)\in R^\infty,\ \alpha = \prod_{n=1}^\infty \alpha_n\right\} \\
			& \leq \|(\phi(\alpha_{m,1}),\phi(\alpha_{m,2}),\ldots)\|_t \\
			& = \phi_t(\alpha),
	\end{align*}
	where the last equality is exactly the right hand side of \eqref{MReps}.  Given any $m$, we have now shown that
	\begin{equation} \label{ReplaceableEquality}
		\phi_t(\alpha) = \inf\left\{ \|(\phi(\alpha_1),\phi(\alpha_2),\ldots)\|_t: (\alpha_1,\alpha_2,\ldots)\in R^\infty,\ \alpha = \prod_{n=1}^\infty \alpha_n\right\}
	\end{equation}
	for all $t\in (t_{m-1},t_m)$.  Moreover, the same argument shows that \eqref{ReplaceableEquality} holds when $t= t_m$ for some $1\leq m < M$.  
	We finally obtain that \eqref{ReplaceableEquality} holds for all $t\in I$ as required.
	
	It is immediately obvious that \eqref{Finite} $\implies$ \eqref{SemiFinite}, so to complete the proof, we must now show that \eqref{SemiFinite} $\implies$ \eqref{FiniteFunc}.
	Lemma \ref{Finiteness} establishes this implication in the case where $I$ is a bounded interval, so we shall assume that $I = (a,\infty)$ for some $a\geq 0$.
	Moreover, this lemma enables us to assume the existence of a set $A \subseteq \phi(S)^\infty$ such that
	\begin{equation*}
		\phi_t(\alpha) = \min\left\{\|\xx\|_t:\xx\in A\right\}
	\end{equation*}
	for all $t\in (a,\infty)$.  Without loss of generality, we may assume that 
	\begin{equation} \label{Falling}
		x_1\geq x_2\geq x_3\geq \ldots
	\end{equation}
	for all $(x_1,x_2,\ldots)\in A$.  Then we define a recursive sequence of sets $A_k\subseteq A$ in the following way.
	\begin{enumerate}[(i)]
		\item Let $A_0 = A$.
		\item Given $A_k$, let $M_{k+1} = \min\{ x_{k+1}:(x_1,x_2,\ldots)\in A_k\}$.  Observe that $M_{k+1}$ exists and is non-negative because $\phi(S)$ is a finite set of
		non-negative numbers and $x_{k+1} \in \phi(S)$.  Now let $A_{k+1} = \left\{ (x_1,x_2,\ldots) \in A_k: x_{k+1} = M_{k+1}\right\}$.
	
	\end{enumerate}
	It is immediately clear that all of these sets are nonempty and $A_{k+1}\subseteq A_k$ for all $k$.
	
	We claim that for every $k\geq 0$, there exists $s_k\in [a,\infty)$ such that
	\begin{equation} \label{FiniteMin}
		\phi_t(\alpha) = \min\left\{ \|\xx\|_t: \xx\in A_k\right\}
	\end{equation}
	for all $t\in (s_k,\infty)$.  We prove this assertion using induction on $k$, and we obtain the base case easily by taking $s_0 = a$.
	
	Now assume that there exists $s_k\in [a,\infty)$ such that \eqref{FiniteMin} holds for all $t\in (s_k,\infty)$.  If $A_{k+1} = A_k$, then we take $s_{k+1} = s_k$
	and the result follows.  Assuming otherwise, we may define
	\begin{equation*}
		M_{k+1}' = \min\left\{x_{k+1}: (x_1,x_2,\ldots) \in A_k \setminus A_{k+1}\right\}.
	\end{equation*}
	As before, we know that this minimum exists because $x_{k+1}$ belongs to the finite set $\phi(S)$ for all $(x_1,x_2,\ldots) \in A_k \setminus A_{k+1}$.
	Furthermore, we plainly have that $M_{k+1}' > M_{k+1}$.  If $t > s_k$ then \eqref{FiniteMin} implies that
	\begin{equation*}
		\phi_t(\alpha)^t = \min\left\{ \|\xx\|_t^t: \xx\in A_k\right\} = \min\left\{ \sum_{n=1}^\infty x_n^t: (x_1,x_2,\ldots) \in A_k\right\}.
	\end{equation*}
	Then we conclude that
	\begin{equation*}
		\phi_t(\alpha)^t - \sum_{n=1}^k M_n^t =  \min\left\{ \sum_{n=k+1}^\infty x_n^t: (x_1,x_2,\ldots) \in A_k\right\}.
	\end{equation*}
	Since $A_{k+1}$ is non-empty, there exists a point $(y_1,y_2,\ldots) \in A_{k+1} \subseteq A_k$, and we obtain
	\begin{equation*}
		\phi_t(\alpha)^t - \sum_{n=1}^k M_n^t  \leq \sum_{n=k+1}^\infty y_n^t
	\end{equation*}
	for all $t\in (s_k,\infty)$.  Therefore,
	\begin{equation*}
		\limsup_{t\to\infty} \left(\phi_t(\alpha)^t - \sum_{n=1}^k M_n^t\right)^{1/t} \leq \limsup_{t\to\infty} \left( \sum_{n=k+1}^\infty y_n^t\right)^{1/t} = y_{k+1} = M_{k+1}.
	\end{equation*}
	We have already noted that $M_{k+1} < M_{k+1}'$, which leads to
	\begin{equation*}
		\limsup_{t\to\infty} \left(\phi_t(\alpha)^t - \sum_{n=1}^k M_n^t\right)^{1/t} < M_{k+1}'.
	\end{equation*}
	Hence, there exists $s_{k+1}\in [s_k,\infty)$ such that  $\left(\phi_t(\alpha)^t - \sum_{n=1}^k M_n^t\right)^{1/t} < M_{k+1}'$ for all $t\in (s_{k+1},\infty)$.
	
	For any point $(x_1,x_2,\ldots) \in A_{k} \setminus A_{k+1}$,  we have that $M_{k+1}' \leq x_{k+1}$ so that
	\begin{equation*}
		 \left(\phi_t(\alpha)^t - \sum_{n=1}^k M_n^t\right)^{1/t} < x_{k+1} \leq \left( \sum_{n=k+1}^\infty x_n^t\right)^{1/t}.
	\end{equation*}
	But $(x_1,x_2,\ldots)\in A_k$, which means that $M_n = x_n$ for all $1\leq n\leq k$, and it follows that
	\begin{equation*}
		\phi_t(\alpha)^t < \sum_{n=1}^k M_n^t + \sum_{n=k+1}^\infty x_n^t = \sum_{n=1}^\infty x_n^t.
	\end{equation*}
	Then using \eqref{FiniteMin}, we have now shown that
	\begin{equation*}
		\min\left\{\|\xx\|_t:\xx\in A_k\right\} < \left( \sum_{n=1}^\infty x_n^t\right)^{1/t}
	\end{equation*}
	for all $(x_1,x_2,\ldots)\in A_k\setminus A_{k+1}$.  Finally, we obtain that
	\begin{equation*}
		\phi_t(\alpha) = \min\left\{\|\xx\|_t:\xx\in A_k\right\} = \min\left\{\|\xx\|_t:\xx\in A_{k+1}\right\}
	\end{equation*}
	for all $t\in (s_{k+1},\infty)$ verifying the inductive step and completing the proof of our claim.
	
	The sequence $M_1,M_2,\ldots $ is non-increasing, so by our assumption that $\phi(S)$ is finite, we know that this sequence is eventually constant.
	Suppose $k\in \nat$ is such that $M_n = M_{n+1}$ for all $n\geq k$.  It is straightforward to verify by induction that there must exist an element $(x_1,x_2,\ldots ) \in A_{k}$ such that
	$x_n = M_n$ for all $n\leq k$.  We cannot have $x_{k+1} < M_{k+1}$ because this would contradict the definition of $M_{k+1}$.  We also cannot have 
	$x_{k+1} > M_{k+1}$ because then $x_{k+1} > M_{k} = x_{k}$ which would contradict \eqref{Falling}.  Therefore, we must have $x_{k + 1} = M_{k + 1}$.
	By induction, we conclude that $x_n = M_n$ for all $n\geq k$.  
	
	We have now shown that $x_n = M_n$ for all $n$, which implies immediately that
	\begin{equation*}
		{\bf M} = (M_1,M_2,\ldots) \in A_k.
	\end{equation*}
	Moreover, we have assumed that $M_n = M_{n+1}$ for all $n\geq k$,
	which means that $x_n = x_{n+1}$ for all $n\geq k$.  Since the sequence $x_1,x_2,\ldots$ must be eventually zero, we conclude that $x_n = M_n = 0$
	for all $n\geq k$.  It now follows that ${\bf M}$ is the only element in $A_k$, implying that
	$\phi_t(\alpha) = \|{\bf M}\|_t$ for all $t\in (s_k,\infty)$.  By Lemma \ref{Finiteness}, there exists a finite set $\mathcal X_0$
	such that $\phi_t(\alpha) = \{\|\xx\|_t:\xx\in \mathcal X_0\}$ for all $t\in (a,s_k + 1)$ and we complete the proof by taking $\mathcal X = \mathcal X_0 \cup \{{\bf M}\}$.
	
\end{proof}

If a set $A$ can be determined, then our proof reveals a finite process for finding ${\bf M}$, even though $A$ 
is possibly infinite.  We are still unaware of a method by which to estimate the number of steps needed to find ${\bf M}$.


\begin{thebibliography}{}
\bibitem{BDM} P. Borwein, E. Dobrowolski and M.J. Mossinghoff,
  {\it Lehmer's problem for polynomials with odd coefficients}, Ann. of Math. (2) {\bf 166} (2007),  no. 2, 347--366.
\bibitem{Dobrowolski} E. Dobrowolski, {\it  On a question of Lehmer and the number of irreducible factors of a polynomial},
 Acta Arith.  {\bf 34} (1979),  no. 4, 391--401.
\bibitem{DubSmyth} A. Dubickas and C.J. Smyth,  {\it On metric heights}, Period. Math. Hungar. {\bf 46} (2) (2003), 135--155.
\bibitem{DubSmyth2}  A. Dubickas and C.J. Smyth, {\it On the metric Mahler measure}, J. Number Theory {\bf 86} (2001), 368--387.
\bibitem{FiliSamuels} P. Fili and C.L. Samuels, {\it On the non-Archimedean metric Mahler measure}, J. Number Theory, {\bf 129} (2009), 1698--1708.
\bibitem{JankSamuels} J. Jankauskas and C.L. Samuels, {\it The t-metric Mahler measures of surds and rational numbers}, Acta Math. Hungar. {\bf 134} (2012), no. 4, 481--498.
\bibitem{Lehmer} D.H. Lehmer, {\it Factorization of certain cyclotomic functions}, Ann. of Math. {\bf 34} (1933), 461--479.
\bibitem{MossWeb} M.J. Mossinghoff, website, {\it Lehmer's Problem}, {\tt http://www.cecm.sfu.ca/~mjm/Lehmer}.
  Math. Comp. {\bf 67} (1998), 1707--1726.
\bibitem{Northcott} D.G. Northcott, {\it An inequality on the theory of arithmetic on algebraic varieties},
	Proc. Cambridge Philos. Soc., {\bf 45} (1949), 502--509.
\bibitem{SamuelsInfimum} C.L. Samuels, {\it The infimum in the metric Mahler measure}, Canad. Math. Bull. {\bf 54} (2011), 739--747.
\bibitem{SamuelsCollection} C.L. Samuels, {\it A collection of metric Mahler measures}, J. Ramanujan Math. Soc. {\bf 25} (2010), no. 4, 433--456.
\bibitem{SamuelsParametrized} C.L. Samuels, {\it The parametrized family of metric Mahler measures}, J. Number Theory {\bf 131} (2011), no. 6, 1070--1088
\bibitem{Schinzel} A. Schinzel, {\it On the product of the conjugates outside the unit circle of an algebraic number},
 Acta Arith. {\bf 24} (1973), 385--399. Addendum, ibid.  {\bf 26} (1975), no. 3, 329--331.
\bibitem{Smyth} C.J. Smyth, {\it  On the product of the conjugates outside the unit circle of an algebraic integer}, Bull. London Math. Soc.  {\bf 3}  (1971), 169--175.
 \bibitem{Voutier} P. Voutier, {\it An effective lower bound for the height of algebraic numbers}, Acta Arith. {\bf 74} (1996), 81--95.




\end{thebibliography}
\end{document}